\begin{document}

\mainmatter

\title{One Mirror Descent Algorithm for Convex Constrained Optimization Problems with Non-Standard Growth Properties}

\titlerunning{One Mirror Descent Algorithm for Convex Constrained Optimization}
\author{Fedor S. Stonyakin\inst{1} \and Alexander A. Titov\inst{2}}
\authorrunning{Fedor Stonyakin and Alexander Titov}
\institute{
    V.\,I.\,Vernadsky Crimean Federal University, Simferopol\\
    \email{fedyor@mail.ru}
    \and
    Moscow Institute of Physics and Technologies, Moscow\\
    \email{a.a.titov@phystech.edu}
    }

\maketitle

\begin{abstract}
The paper is devoted to a special Mirror Descent algorithm for problems of convex minimization with functional constraints. The objective function may not satisfy the Lipschitz condition, but it must necessarily have a Lipshitz-continuous gradient. We assume, that the functional constraint can be non-smooth, but satisfying the Lipschitz condition. In particular, such functionals appear in the well-known Truss Topology Design problem. Also, we have applied the technique of restarts in the mentioned version of Mirror Descent for strongly convex problems. Some estimations for the rate of convergence are investigated for the Mirror Descent algorithms under consideration.
\keywords{adaptive Mirror Descent algorithm, Lipshitz-continuous gradient, technique of restarts.}
\end{abstract}

\section{Introduction}

The optimization of non-smooth functionals with constraints attracts widespread interest in large-scale optimization and its applications \cite{bib_ttd,bib_Shpirko}. There are various methods of solving this kind of optimization problems. Some examples of these methods are: bundle-level method \cite{bib_Nesterov}, penalty method \cite{bib_Vasilyev}, Lagrange multipliers method \cite{bib_Boyd}. Among them, Mirror Descent (MD) \cite{beck2003mirror,nemirovsky1983problem} is viewed as a simple method for non-smooth convex optimization.

In optimization problems with quadratic functionals we consider functionals which do not satisfy the usual Lipschitz property (or the Lipschitz constant is quite large), but they have a Lipshitz-continuous gradient. For such problems in (\cite{bib_Adaptive}, item 3.3) the ideas of \cite{bib_Nesterov,bib_Nesterov2016} were adopted to construct some adaptive version of Mirror Descent algorithm. For example, let $A_i$ ($i=1,\ldots,m$) be a positive-definite matrix: $x^TA_ix\geq 0\ \forall x$
and the objective function
$$f(x)=\max\limits_{1\leq i\leq m}f_i(x)$$
for
$$
f_i(x)=\frac{1}{2}\langle A_ix,x\rangle-\langle b_i,x\rangle+\alpha_i,\;i=1,\ldots,m.
$$
Note that such functionals appear in the Truss Topology Design problem with weights of the bars \cite{bib_Nesterov2016}.

In this paper we propose some partial adaptive (by objective functional) version of algorithm from (\cite{bib_Adaptive}, item 3.3). It simplifies work with problems where the necessity of calculating the norm of the subgradient of the functional constraint is burdensome in view of the large number of constraints. The idea of restarts \cite{bib_JuNe} is adopted to construct the proposed algorithm in the case of strongly convex objective and constraints. It is well-known that both considered methods are optimal in terms of the lower bounds \cite{nemirovsky1983problem}.

Note that a functional constraint, generally, can be non-smooth. That is why we consider subgradient methods. These methods have a long history starting with the method for deterministic unconstrained problems and Euclidean setting in \cite{shor1967generalized} and the generalization for constrained problems in \cite{polyak1967general}, where the idea of steps switching between the direction of subgradient of the objective and the direction of subgradient of the constraint was suggested. Non-Euclidean extension, usually referred to as Mirror Descent, originated in \cite{nemirovskii1979efficient,nemirovsky1983problem} and was later analyzed in \cite{beck2003mirror}. An extension for constrained problems was proposed in \cite{nemirovsky1983problem}, see also recent version in \cite{beck2010comirror}. To prove faster convergence rate of Mirror Descent for strongly convex objective in an unconstrained case, the restart technique \cite{nemirovskii1985optimal,nemirovsky1983problem,nesterov1983method} was used in \cite{bib_Juditsky}. Usually, the stepsize and stopping rule for Mirror Descent requires to know the Lipschitz constant of the objective function and constraint, if any. Adaptive stepsizes, which do not require this information, are considered in \cite{bib_Nemirovski} for problems without inequality constraints, and in \cite{beck2010comirror} for constrained problems.

We consider some Mirror Descent algorithms for constrained problems in the case of non-standard growth properties of objective functional
(it has a Lipshitz-continuous gradient).

The paper consists of Introduction and three main sections. In Section 2 we give some basic notation concerning convex optimization problems with functional constrains. In Section 3 we describe some partial adaptive version (Algorithm \ref{alg1}) of Mirror Descent algorithm from (\cite{bib_Adaptive}, item 3.3) and prove some estimates for the rate of convergence of Algorithm \ref{alg1}. The last Section 4 is focused on the strongly convex case with restarting Algorithm \ref{alg1} and corresponding theoretical estimates for the rate of convergence.

\section{Problem Statement and Standard Mirror Descent Basics}

Let $(E,||\cdot||)$ be a normed finite-dimensional vector space and $E^*$ be the conjugate space of $E$ with the norm:
$$||y||_*=\max\limits_x\{\langle y,x\rangle,||x||\leq1\},$$
where $\langle y,x\rangle$ is the value of the continuous linear functional $y$ at $x \in E$.

Let $X\subset E$ be a (simple) closed convex set. We consider two convex subdiffirentiable functionals $f$ and $g:X\rightarrow\mathbb{R}$. Also, we assume that $g$ is Lipschitz-continuous:

\begin{equation}\label{eq1}
|g(x)-g(y)|\leq M_g||x-y||\;\forall x,y\in X.
\end{equation}

We focus on the next type of convex optimization problems
\begin{equation}\label{eq2}
 f(x) \rightarrow \min\limits_{x\in \mathcal{X}},
\end{equation}
\begin{equation}
\label{problem_statement_g}
    \mathrm{s.t.} \hspace{0.3cm} g(x) \leq 0.
\end{equation}

Let $d : X \rightarrow \mathbb{R}$ be a distance generating function (d.g.f) which is continuously differentiable and $1$-strongly convex w.r.t. the norm $\lVert\cdot\rVert$, i.e.
$$\forall x, y, \in X \hspace{0.2cm} \langle \nabla d(x) - \nabla d(y), x-y \rangle \geq \lVert x-y \rVert^2,$$
and assume that $\min\limits_{x\in X} d(x) = d(0).$ Suppose we have a constant $\Theta_0$ such that
$d(x_{*}) \leq \Theta_0^2,$ where $x_*$ is a solution of (\ref{eq2}) -- (\ref{problem_statement_g}).

Note that if there is a set of optimal points $X_*$, then we may assume that
$$\min\limits_{x_* \in X_*} d(x_*) \leq \Theta_0^2.$$
For all $x, y\in X$ consider the corresponding Bregman divergence
$$V(x, y) = d(y) - d(x) - \langle \nabla d(x), y-x \rangle.$$
Standard proximal setups, i.e. Euclidean, entropy, $\ell_1/\ell_2$, simplex, nuclear norm, spectahedron can be found, e.g. in \cite{bib_Nemirovski}. Let us define the proximal mapping operator standardly
$$\mathrm{Mirr}_x (p) = \arg\min\limits_{u\in X} \big\{ \langle p, u \rangle + V(x, u) \big\} \; \text{ for each }x\in X \text{ and }p\in E^*.$$
We make the simplicity assumption, which means that $\mathrm{Mirr}_x (p)$ is easily computable.

\section{Some Mirror Descent Algorithm for the Type of Problems Under Consideration}

Following \cite{bib_Nesterov}, given a function $f$ for each subgradient $\nabla f(x)$ at a point $y \in X$, we define
\begin{equation}
v_f(x, y)=\left\{
\begin{aligned}
&\left\langle\frac{\nabla f(x)}{\|\nabla f(x)\|_{*}},x-y\right\rangle, \quad &\nabla f(x) \ne 0\\
&0 &\nabla f(x) = 0\\
\end{aligned}
\right.,\quad x \in X.
\label{eq:vfDef}
\end{equation}

In (\cite{bib_Adaptive}, item 3.3) the following adaptive Mirror Descent algorithm for Problem (\ref{eq2}) -- (\ref{problem_statement_g}) was proposed by the first author.

\begin{algorithm}
\caption{Adaptive Mirror Descent, Non-Standard Growth}
\label{alg01}
\begin{algorithmic}[1]
\REQUIRE $\varepsilon,\Theta_0^2,X,d(\cdot)$
\STATE $x^0=\underset{x\in X}{argmin}\,d(x)$
\STATE $I=:\emptyset$
\STATE $N\leftarrow 0$
\REPEAT
    \IF{$g(x^N)\leq\varepsilon \rightarrow$}
        \STATE $h_N\leftarrow\frac{\varepsilon}{||\nabla f(x^N)||_{*}}$
        \STATE $x^{N+1}\leftarrow Mirr_{x^N}(h_N\nabla f(x^N)) \; \text{("productive steps")}$
        \STATE $N\rightarrow I$
    \ELSE
        \STATE $(g(x^N)>\varepsilon)\rightarrow$
        \STATE $h_N\leftarrow\frac{\varepsilon}{||\nabla g(x^N)||_{*}^2}$
        \STATE $x^{N+1}\leftarrow Mirr_{x^N}(h_N\nabla g(x^N))\; \text{("non-productive steps")}$
    \ENDIF
    \STATE $N\leftarrow N+1$
\UNTIL{$\Theta_0^2 \leq \frac{\varepsilon^2}{2}|I| + \sum\limits_{k \not\in I} \frac{\varepsilon^2}{2||\nabla g(x^k)||_{*}^2}$}
\ENSURE $\bar{x}^N := \arg \min_{x^k, k\in I} f(x^k)$
\end{algorithmic}
\end{algorithm}

\newpage

For the previous method the next result was obtained in \cite{bib_Adaptive}.
\begin{theorem}\label{th01}
Let $\varepsilon > 0$ be a fixed positive number and
Algorithm \ref{alg01} works
\begin{equation}\label{eq08}
N=\left\lceil\frac{2\max\{1, M_g^2\}\Theta_0^2}{\varepsilon^2}\right\rceil
\end{equation}
steps. Then
\begin{equation}\label{eq09}
\min\limits_{k \in I} v_f(x^k,x_*)<\varepsilon.
\end{equation}
\end{theorem}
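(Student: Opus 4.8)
The plan is to follow the standard Mirror Descent energy-telescoping argument, adapted to the two-regime (productive / non-productive) structure of Algorithm~\ref{alg01}. First I would recall the fundamental descent inequality for the proximal step: for any $u \in X$ and any $p \in E^*$, if $z = \mathrm{Mirr}_x(p)$, then
\begin{equation*}
\langle p, x - u \rangle \le \frac{1}{2}\|p\|_*^2 + V(x,u) - V(z,u).
\end{equation*}
This is the classical ``three-point'' lemma for Bregman projections and uses only $1$-strong convexity of $d$; I would treat it as known. Applying it with $x = x^k$, $u = x_*$ and the specific step $p = h_k \nabla f(x^k)$ on productive steps, respectively $p = h_k \nabla g(x^k)$ on non-productive steps, yields on productive steps $k \in I$
\begin{equation*}
h_k \langle \nabla f(x^k), x^k - x_* \rangle \le \frac{h_k^2}{2}\|\nabla f(x^k)\|_*^2 + V(x^k,x_*) - V(x^{k+1},x_*),
\end{equation*}
and similarly for $k \notin I$ with $g$ in place of $f$.

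Next I would substitute the prescribed stepsizes. On a productive step, $h_k = \varepsilon / \|\nabla f(x^k)\|_*$, so $h_k \langle \nabla f(x^k), x^k - x_*\rangle = \varepsilon\, v_f(x^k, x_*)$ and $\frac{h_k^2}{2}\|\nabla f(x^k)\|_*^2 = \varepsilon^2/2$. On a non-productive step, $h_k = \varepsilon / \|\nabla g(x^k)\|_*^2$, so the ``quadratic'' term is $\varepsilon^2/(2\|\nabla g(x^k)\|_*^2)$, and the left side is $h_k \langle \nabla g(x^k), x^k - x_*\rangle$. Here I would use convexity of $g$ together with the fact that on a non-productive step $g(x^k) > \varepsilon \ge 0 = g(x_*) \ge \ldots$ — more precisely, $g(x_*) \le 0 < \varepsilon < g(x^k)$ gives $\langle \nabla g(x^k), x^k - x_* \rangle \ge g(x^k) - g(x_*) > \varepsilon$. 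Combined with the Lipschitz bound $\|\nabla g(x^k)\|_* \le M_g$ (which follows from \eqref{eq1}), the non-productive contribution $h_k \langle \nabla g(x^k), x^k - x_*\rangle$ is strictly larger than $\varepsilon^2 / M_g^2 \ge \varepsilon^2/\max\{1,M_g^2\}$, i.e.\ each non-productive step ``spends'' at least a fixed positive amount of the potential.

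Then I would sum the inequality over all $k = 0, 1, \ldots, N-1$. The Bregman terms telescope to $V(x^0, x_*) - V(x^N, x_*) \le V(x^0, x_*) \le d(x_*) \le \Theta_0^2$ (using $x^0 = \arg\min d$, so $\nabla d(x^0)$ annihilates the linear term, and $\min d = d(0)$). Collecting terms, we get
\begin{equation*}
\sum_{k \in I} \varepsilon\, v_f(x^k, x_*) + \sum_{k \notin I} \varepsilon^2/M_g^2 \;<\; \Theta_0^2 + \frac{\varepsilon^2}{2}|I| + \sum_{k\notin I}\frac{\varepsilon^2}{2\|\nabla g(x^k)\|_*^2}.
\end{equation*}
Here I would exploit the stopping criterion: since the algorithm is run for exactly $N = \lceil 2\max\{1,M_g^2\}\Theta_0^2/\varepsilon^2\rceil$ steps, I must check that this forces enough productive steps. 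If $|I|$ were too small — say all $N$ steps were non-productive — then the left side would be at least $N\varepsilon^2/\max\{1,M_g^2\} \ge 2\Theta_0^2$, while the right side, with $|I| = 0$ and $\|\nabla g\|_* \le M_g$, is at most $\Theta_0^2 + N\varepsilon^2/(2M_g^2)$... wait, this needs care: $N\varepsilon^2/(2M_g^2)$ could be comparable to $\Theta_0^2$. The clean way is to rearrange so the non-productive ``quadratic'' term $\varepsilon^2/(2\|\nabla g(x^k)\|_*^2)$ on the right is dominated by the non-productive ``linear'' term $\varepsilon^2/\|\nabla g(x^k)\|_*^2 > \varepsilon^2/(2\|\nabla g(x^k)\|_*^2)$ on the left, so those cancel with a strict sign in favor of the left side; this leaves
\begin{equation*}
\varepsilon \sum_{k \in I} v_f(x^k, x_*) + \tfrac12\sum_{k\notin I}\frac{\varepsilon^2}{\|\nabla g(x^k)\|_*^2} \;<\; \Theta_0^2 + \frac{\varepsilon^2}{2}|I|.
\end{equation*}
Now, were $I$ empty, the left side would still be positive but we would need $\sum_{k\notin I}\varepsilon^2/(2\|\nabla g(x^k)\|_*^2) < \Theta_0^2$, i.e.\ the stopping condition would not yet be met — contradicting that we ran $N$ steps (one checks $N$ is large enough that $\sum_{k\notin I}\varepsilon^2/(2M_g^2) \ge N\varepsilon^2/(2\max\{1,M_g^2\}) \ge \Theta_0^2$ when all steps are non-productive). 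Hence $I \ne \emptyset$, and dividing the displayed inequality by $\varepsilon |I|$ and dropping the nonnegative $g$-sum on the left gives $\min_{k\in I} v_f(x^k, x_*) \le \frac{1}{|I|}\sum_{k\in I} v_f(x^k,x_*) < \Theta_0^2/(\varepsilon|I|) \cdot \varepsilon + \varepsilon/2$; combined once more with the stopping rule $\Theta_0^2 \le \frac{\varepsilon^2}{2}|I| + \sum_{k\notin I}\frac{\varepsilon^2}{2\|\nabla g(x^k)\|_*^2}$ to bound $\Theta_0^2/|I|$, the right side collapses to something below $\varepsilon$, yielding \eqref{eq09}.

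The main obstacle, as the sketch above already reveals, is the bookkeeping at the stopping criterion: one must carefully combine the summed energy inequality with the \texttt{UNTIL} condition so that the non-productive terms cancel cleanly and the bound on $\min_{k\in I} v_f(x^k,x_*)$ comes out strictly below $\varepsilon$ rather than $\varepsilon$ times a constant. In particular the role of $\max\{1, M_g^2\}$ in $N$ is precisely to guarantee that the stopping condition \emph{is} reached within $N$ steps (so the algorithm terminates and $I \ne \emptyset$), and it is worth isolating that as a separate claim: if fewer than the claimed number of productive steps had occurred, the non-productive steps alone would already trigger the stopping rule. Everything else — the three-point lemma, the telescoping, the convexity estimates for $f$ and $g$, and $\|\nabla g\|_* \le M_g$ — is routine.
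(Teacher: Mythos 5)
Your overall scheme is the right one and matches how the paper argues: the paper itself only cites Theorem~\ref{th01} from \cite{bib_Adaptive} and instead proves the analogous Theorem~\ref{th1} for Algorithm~\ref{alg1}, and that proof is exactly your telescoping argument, except that it bounds $\Theta_0^2$ by $N\varepsilon^2/(2M_g^2)$ directly from the step count rather than routing through the adaptive \texttt{UNTIL} condition. Your per-step inequalities, the substitution of the stepsizes, the lower bound $h_k\langle\nabla g(x^k),x^k-x_*\rangle>\varepsilon^2/\|\nabla g(x^k)\|_*^2$ on non-productive steps, the telescoping to $\Theta_0^2$, and the check that $N$ steps force the stopping rule (hence $I\neq\emptyset$) are all correct.

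The one step that fails as written is the final combination. From
$$\varepsilon\sum_{k\in I}v_f(x^k,x_*)+\sum_{k\notin I}\frac{\varepsilon^2}{2\|\nabla g(x^k)\|_*^2}<\Theta_0^2+\frac{\varepsilon^2}{2}|I|$$
you propose to first drop the nonnegative $g$-sum on the left and only then bound $\Theta_0^2/(\varepsilon|I|)$ via the stopping rule. That route yields
$$\min_{k\in I}v_f(x^k,x_*)<\frac{\varepsilon}{2}+\frac{\Theta_0^2}{\varepsilon|I|}\le\frac{\varepsilon}{2}+\frac{\varepsilon}{2}+\frac{1}{\varepsilon|I|}\sum_{k\notin I}\frac{\varepsilon^2}{2\|\nabla g(x^k)\|_*^2},$$
which is $\varepsilon$ plus a strictly positive quantity whenever non-productive steps occur, not $<\varepsilon$. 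The fix is precisely the cancellation you announce two sentences earlier but then abandon: keep the $g$-sum on the left, substitute the stopping rule $\Theta_0^2\le\frac{\varepsilon^2}{2}|I|+\sum_{k\notin I}\frac{\varepsilon^2}{2\|\nabla g(x^k)\|_*^2}$ into the right-hand side, and let the two identical $g$-sums cancel; this leaves $\varepsilon\sum_{k\in I}v_f(x^k,x_*)<\varepsilon^2|I|$, which simultaneously gives $I\neq\emptyset$ and $\min_{k\in I}v_f(x^k,x_*)<\varepsilon$. With that one-line correction the argument is complete and coincides in substance with the paper's.
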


Let us remind one well-known statement (see, e.g. \cite{bib_Nemirovski}).

\begin{lemma}\label{lem1}
Let $f:X\rightarrow\mathbb{R}$ be a convex subdifferentiable function over the convex set $X$ and a sequence $\{x^k\}$ be defined by the following relation: $$x^{k+1}=Mirr_{x^k}(h_k\nabla f(x^k)).$$
Then for each $x\in X$
\begin{equation}\label{eq7}
h_k\langle\nabla f(x^k),x^k-x\rangle\leq\frac{h_k^2}{2}||\nabla f(x^k)||_*^2+V(x^k,x)-V(x^{k+1},x).
\end{equation}
\end{lemma}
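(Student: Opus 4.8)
This is a standard "descent lemma" for the mirror descent step, so the plan is to unwind the definition of the proximal mapping and use the first-order optimality condition together with the three-point identity for the Bregman divergence.

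First I would set $x^{k+1} = \mathrm{Mirr}_{x^k}(h_k \nabla f(x^k)) = \arg\min_{u \in X}\{\langle h_k \nabla f(x^k), u\rangle + V(x^k,u)\}$. Since the objective $u \mapsto \langle h_k \nabla f(x^k), u\rangle + V(x^k,u)$ is convex and differentiable in $u$ (using that $d$ is continuously differentiable), the optimality condition over the convex set $X$ gives, for every $x \in X$,
$$\langle h_k \nabla f(x^k) + \nabla_u V(x^k, x^{k+1}), x - x^{k+1}\rangle \geq 0.$$
Now $\nabla_u V(x^k,u) = \nabla d(u) - \nabla d(x^k)$, so $\nabla_u V(x^k, x^{k+1}) = \nabla d(x^{k+1}) - \nabla d(x^k)$. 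Substituting yields
$$h_k\langle \nabla f(x^k), x^{k+1} - x\rangle \leq \langle \nabla d(x^{k+1}) - \nabla d(x^k), x - x^{k+1}\rangle.$$

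Second, I would rewrite the right-hand side using the three-point identity for Bregman divergence: for any $a,b,c$,
$$\langle \nabla d(b) - \nabla d(a), c - b\rangle = V(a,c) - V(a,b) - V(b,c).$$
With $a = x^k$, $b = x^{k+1}$, $c = x$ this gives $\langle \nabla d(x^{k+1}) - \nabla d(x^k), x - x^{k+1}\rangle = V(x^k, x) - V(x^k, x^{k+1}) - V(x^{k+1}, x)$. Hence
$$h_k\langle \nabla f(x^k), x^{k+1} - x\rangle \leq V(x^k,x) - V(x^{k+1},x) - V(x^k, x^{k+1}).$$
Then I split $\langle \nabla f(x^k), x^k - x\rangle = \langle \nabla f(x^k), x^{k+1} - x\rangle + \langle \nabla f(x^k), x^k - x^{k+1}\rangle$ and bound the last inner product: by the definition of the dual norm, $h_k\langle \nabla f(x^k), x^k - x^{k+1}\rangle \leq h_k\|\nabla f(x^k)\|_* \|x^k - x^{k+1}\|$, and then Young's inequality $h_k\|\nabla f(x^k)\|_*\|x^k-x^{k+1}\| \leq \frac{h_k^2}{2}\|\nabla f(x^k)\|_*^2 + \frac12\|x^k - x^{k+1}\|^2$. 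Finally, $1$-strong convexity of $d$ gives $V(x^k, x^{k+1}) \geq \frac12\|x^k - x^{k+1}\|^2$, so the $\frac12\|x^k-x^{k+1}\|^2$ term is absorbed by $-V(x^k,x^{k+1})$, and collecting everything produces exactly \eqref{eq7}.

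The only mildly delicate point is making the first-order optimality condition rigorous when the minimizer lies on the boundary of $X$ — but since $V(x^k,\cdot)$ is differentiable and $X$ is convex, the standard variational inequality for constrained convex minimization applies directly, so this is not a real obstacle; the rest is bookkeeping with the Bregman identity and Young's inequality.
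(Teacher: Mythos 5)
Your proof is correct and is exactly the standard argument (first-order optimality of the prox step, the three-point identity for the Bregman divergence, Fenchel--Young, and $1$-strong convexity of $d$ to absorb $\tfrac12\|x^k-x^{k+1}\|^2$ into $-V(x^k,x^{k+1})$). The paper does not prove this lemma at all --- it cites it as well known from \cite{bib_Nemirovski} --- and your write-up is precisely the proof one would find there.
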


\newpage

The following Algorithm \ref{alg1} is proposed by us for Problem (\ref{eq2}) -- (\ref{problem_statement_g}).

\begin{algorithm}
\caption{Partial Adaptive Version of Algorithm \ref{alg01}}
\label{alg1}
\begin{algorithmic}[1]
\REQUIRE $\varepsilon,\Theta_0^2,X,d(\cdot)$
\STATE $x^0=\underset{x\in X}{argmin}\,d(x)$
\STATE $I=:\emptyset$
\STATE $N\leftarrow 0$
    \IF{$g(x^N)\leq\varepsilon \rightarrow$}
        \STATE $h_N\leftarrow\frac{\varepsilon}{M_g \cdot ||\nabla f(x^N)||_{*}}$
        \STATE $x^{N+1}\leftarrow Mirr_{x^N}(h_N\nabla f(x^N)) \; \text{("productive steps")}$
        \STATE $N\rightarrow I$
    \ELSE
        \STATE $(g(x^N)>\varepsilon)\rightarrow$
        \STATE $h_N\leftarrow\frac{\varepsilon}{M_g^2}$
        \STATE $x^{N+1}\leftarrow Mirr_{x^N}(h_N\nabla g(x^N))\; \text{("non-productive steps")}$
    \ENDIF
    \STATE $N\leftarrow N+1$
    \ENSURE $\bar{x}^N := \arg \min_{x^k, k\in I} f(x^k)$
\end{algorithmic}
\end{algorithm}

Set $[N]=\{k\in\overline{0,N-1}\},\;J=[N]/I$, where $I$ is a collection of indexes of productive steps
\begin{equation}\label{eq4}
h_k=\frac{\varepsilon}{M_g||\nabla f(x^k)||_{*}},
\end{equation}
and $|I|$ is the number of "productive steps". Similarly, for "non-productive" steps from the set $J$ the analogous variable is defined as follows:
\begin{equation}\label{eq5}
h_k=\frac{\varepsilon}{M_g^2},
\end{equation}
and $|J|$ is the number of "non-productive steps". Obviously,
\begin{equation}\label{eq6}
|I|+|J|=N.
\end{equation}

Let us formulate the following analogue of Theorem \ref{th01}.
\begin{theorem}\label{th1}
Let $\varepsilon > 0$ be a fixed positive number and
Algorithm \ref{alg1} works
\begin{equation}\label{eq8}
N=\left\lceil\frac{2M_g^2\Theta_0^2}{\varepsilon^2}\right\rceil
\end{equation}
steps. Then
\begin{equation}\label{eq9}
\min\limits_{k\in I} v_f(x^k,x_*)<\frac{\varepsilon}{M_g}.
\end{equation}
\end{theorem}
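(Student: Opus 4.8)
The plan is to run the standard Mirror Descent argument: apply Lemma~\ref{lem1} with $x=x_*$ at every one of the $N$ iterations, substitute the explicit step sizes (\ref{eq4})--(\ref{eq5}) into its right-hand side, sum over all iterations so that the Bregman terms telescope and are controlled by $d(x_*)\le\Theta_0^2$, and finally derive (\ref{eq9}) by contradiction against the prescribed value (\ref{eq8}) of $N$. Throughout, $I$ is the set of productive steps and $J=[N]\setminus I$ the set of non-productive ones, with $|I|+|J|=N$ by (\ref{eq6}).

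First I would record the per-iteration estimates. On a productive step $k\in I$ the update uses $\nabla f(x^k)$ with $h_k=\varepsilon/(M_g\|\nabla f(x^k)\|_*)$, so by the definition (\ref{eq:vfDef}) one has $h_k\langle\nabla f(x^k),x^k-x_*\rangle=\frac{\varepsilon}{M_g}v_f(x^k,x_*)$ and $\frac{h_k^2}{2}\|\nabla f(x^k)\|_*^2=\frac{\varepsilon^2}{2M_g^2}$; the degenerate case $\nabla f(x^k)=0$ is harmless, since then $x^k$ minimizes $f$ on $X$, is feasible up to $\varepsilon$, and $v_f(x^k,x_*)=0$, so (\ref{eq9}) already holds. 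On a non-productive step $k\in J$ we have $h_k=\varepsilon/M_g^2$ and the update uses $\nabla g(x^k)$; here I would use convexity of $g$ together with the guard $g(x^k)>\varepsilon\ge 0\ge g(x_*)$ to get $\langle\nabla g(x^k),x^k-x_*\rangle\ge g(x^k)-g(x_*)>\varepsilon$, hence $h_k\langle\nabla g(x^k),x^k-x_*\rangle>\varepsilon^2/M_g^2$, and the Lipschitz bound (\ref{eq1}) to get $\|\nabla g(x^k)\|_*\le M_g$, hence $\frac{h_k^2}{2}\|\nabla g(x^k)\|_*^2\le\frac{\varepsilon^2}{2M_g^2}$.

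Next I would sum the inequality of Lemma~\ref{lem1} over $k=0,\dots,N-1$. The Bregman part telescopes to $V(x^0,x_*)-V(x^N,x_*)\le V(x^0,x_*)$, and since $d$ is $1$-strongly convex and $x^0=\arg\min_{X}d$ one has $V(x^0,x_*)\le d(x_*)\le\Theta_0^2$ while $V(x^N,x_*)\ge 0$. Plugging in the Step-1 estimates, the summed right-hand side is at most $N\frac{\varepsilon^2}{2M_g^2}+\Theta_0^2$ and the summed left-hand side is at least $\frac{\varepsilon}{M_g}\sum_{k\in I}v_f(x^k,x_*)+|J|\frac{\varepsilon^2}{M_g^2}$, giving the master inequality
$$\frac{\varepsilon}{M_g}\sum_{k\in I}v_f(x^k,x_*)+|J|\frac{\varepsilon^2}{M_g^2}<N\frac{\varepsilon^2}{2M_g^2}+\Theta_0^2.$$
Now suppose, for contradiction, that $v_f(x^k,x_*)\ge\varepsilon/M_g$ for every $k\in I$ (this statement also subsumes the possibility $I=\emptyset$). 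Then the left-hand side is at least $|I|\frac{\varepsilon^2}{M_g^2}+|J|\frac{\varepsilon^2}{M_g^2}=N\frac{\varepsilon^2}{M_g^2}$ by (\ref{eq6}), so $N\frac{\varepsilon^2}{2M_g^2}<\Theta_0^2$, i.e. $N<2M_g^2\Theta_0^2/\varepsilon^2$, contradicting $N=\lceil 2M_g^2\Theta_0^2/\varepsilon^2\rceil$. Hence $I\ne\emptyset$ and $\min_{k\in I}v_f(x^k,x_*)<\varepsilon/M_g$, which is (\ref{eq9}).

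As for the main obstacle: there is no deep difficulty, the argument is essentially bookkeeping, but the delicate point is the non-productive estimate, where convexity of $g$, the guard $g(x^k)>\varepsilon$, and $\|\nabla g(x^k)\|_*\le M_g$ must be combined so that the constants line up exactly -- the $|J|$-contribution $|J|\frac{\varepsilon^2}{M_g^2}$ has to precisely absorb the cumulative overhead $\sum_k\frac{h_k^2}{2}\|\cdot\|_*^2$ of all steps, which is what makes the final comparison with the ceiling in (\ref{eq8}) tight. One should also not forget to handle the trivial cases $\nabla f(x^k)=0$ and $I=\emptyset$ explicitly (the latter being automatic from the vacuously true hypothesis in the contradiction).
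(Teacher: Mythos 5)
Your proposal is correct and follows essentially the same route as the paper: apply Lemma~\ref{lem1} at each step, substitute the step sizes \eqref{eq4}--\eqref{eq5}, telescope the Bregman terms against $\Theta_0^2$, use $g(x^k)-g(x_*)>\varepsilon$ on non-productive steps, and compare with the prescribed $N$. The only (cosmetic) differences are that you fold the $I=\emptyset$ case into a single contradiction, whereas the paper first derives $\min_{k\in I}v_f(x^k,x_*)<\varepsilon/M_g$ assuming $I\neq\emptyset$ and then rules out $I=\emptyset$ separately, and that you explicitly dispose of the degenerate case $\nabla f(x^k)=0$, which the paper leaves implicit.
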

\begin{proof}
1) For the productive steps from \eqref{eq7}, \eqref{eq4} one can get, that $$h_k\langle\nabla f(x^k), x^k-x\rangle\leq\frac{h_k^2}{2}||\nabla f(x^k)||_*^2+V(x^k,x)-V(x^{k+1},x).$$

Taking into account $\frac{h_k^2}{2}||\nabla f(x^k)||_*^2=\frac{\varepsilon^2}{2M_g^2}$, we have
\begin{equation}\label{eq001}
h_k\langle\nabla f(x^k),x^k-x\rangle=\frac{\varepsilon}{M_g}\left\langle\frac{\nabla f(x^k)}{||\nabla f(x^k)||_*},\, x^k-x\right\rangle=\frac{\varepsilon}{M_g}v_f(x^k,x).
\end{equation}

2) Similarly for the "non-productive" steps $k\in J$:
$$h_k(g(x^k)-g(x))\leq\frac{h_k^2}{2}||\nabla g(x^k)||_*^2+V(x^k,x)-V(x^{k+1},x).$$

Using (\ref{eq1}) and $||\nabla g(x)||\leq M_g$ we have
\begin{equation}\label{eq002}
h_k(g(x^k)-g(x))\leq\frac{\varepsilon^2}{2M_g^2}+V(x^k,x)-(x^{k+1},x).
\end{equation}

3) From (\ref{eq001}) and (\ref{eq002}) for $x=x_*$ we have:
$$
\frac{\varepsilon}{M_g}\sum\limits_{k\in I}v_f(x^k,x_*)+\sum\limits_{k\in J}\frac{\varepsilon}{M_g^2}(g(x^k)-g(x_*))\leq
$$
\begin{equation}\label{eq10}
\leq N\frac{\varepsilon^2}{2M_g^2}+
\sum\limits_{k=0}^{N-1}(V(x^k,x_*)-V(x^{k+1},x_*)).
\end{equation}

Let us note that for any $k \in J$
$$g(x^k)-g(x_*)\geq g(x^k)>\varepsilon$$
and in view of
$$\sum\limits_{k=1}^N(V(x^k,x_*)-V(x^{k+1},x_*))\leq\Theta_0^2$$
the inequality (\ref{eq10}) can be transformed in the following way:
$$\frac{\varepsilon}{M_g}\sum\limits_{k\in I}v_f(x^k,x_*)\leq N\frac{\varepsilon^2}{2M_g^2}+\Theta_0^2-\frac{\varepsilon^2}{M_g^2}|J|.$$

On the other hand,
$$\sum\limits_{k\in I}v_f(x^k,x_*)\geq|I|\min\limits_{k\in I}v_f(x^k,x_*).$$

Assume that
\begin{equation}\label{eq11}
\frac{\varepsilon^2}{2M_g^2}N\geq\Theta_0^2, \text{ or } N\geq\frac{2M_g\Theta_0^2}{\varepsilon^2}.
\end{equation}
Thus,
$$|I|\frac{\varepsilon}{M_g}\min v_f(x^k,x_*)<N\frac{\varepsilon^2}{2M_g^2}-\frac{\varepsilon^2}{M_g^2}|J|+\Theta_0^2\leq$$
$$\leq\frac{N\varepsilon^2}{M_g^2}-\frac{\varepsilon^2}{M_g^2}|J|=\frac{\varepsilon^2}{M_g^2}|I|,$$
whence
\begin{equation}\label{eq12}
|I|\frac{\varepsilon}{M_g}\min v_f(x^k,x_*)<\frac{\varepsilon^2}{M_g^2}|I|\Rightarrow\min v_f(x^k,x_*)<\frac{\varepsilon}{M_g}.
\end{equation}

To finish the proof we should demonstrate that $|I|\neq 0$.
Supposing the reverse we claim that $|I|=0\Rightarrow|J|=N$, i.e. all the steps are non-productive, so after using
$$g(x^k)-g(x_*)\geq g(x^k)>\varepsilon$$
we can see, that $$\sum\limits_{k=0}^{N-1} h_k(g(x^k)-g(x_*))\leq\sum\limits_{k=0}^{N-1}\frac{\varepsilon^2}{2M_g^2}+\Theta_0^2\leq
N\frac{\varepsilon^2}{2M_g^2}+N\frac{\varepsilon^2}{2M_g^2}=N\frac{\varepsilon^2}{M_g^2}.
$$
So, $$\frac{\varepsilon}{M_g^2}\sum\limits_{k=0}^{N-1}(g(x^k)-g(x_*))\leq\frac{N\varepsilon^2}{M_g^2}$$
and
$$N\varepsilon<\sum\limits_{k=0}^{N-1}(g(x^k)-g(x_*))\leq N\varepsilon.$$

So, we have the contradiction. It means that $|I|\neq 0$.
\end{proof}

The following auxiliary assertion (see, e.g \cite{bib_Nesterov,bib_Nesterov2016}) is fullfilled ($x_*$ is a solution of \eqref{eq2} --  \eqref{problem_statement_g}).
\begin{lemma}
Let us introduce the following function:
\begin{equation}\label{eq13}
\omega(\tau)=\max\limits_{x\in X}\{f(x)-f(x_*):||x-x_*||\leq\tau\},
\end{equation} where $\tau$ is a positive number.
Then for any $y \in X$
\begin{equation}\label{eq_lemma}
f(y) - f(x_*) \leq \omega(v_f(y,x_*)).
\end{equation}
\end{lemma}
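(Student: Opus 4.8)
The plan is to prove the inequality by constructing, for the given $y \in X$, an explicit point $z$ in the ball $\{x : \|x - x_*\| \le v_f(y,x_*)\}$ at which the value of $f$ is already at least $f(y)$; the bound $f(y) - f(x_*) \le \omega(v_f(y,x_*))$ is then immediate from the definition (\ref{eq13}) of $\omega$ as a maximum of $f(x) - f(x_*)$ over that ball.

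First I would dispose of the case $v_f(y,x_*) = 0$ (which includes $\nabla f(y) = 0$, where $v_f$ is set to $0$ by (\ref{eq:vfDef})): there convexity gives $f(y) - f(x_*) \le \langle \nabla f(y), y - x_*\rangle = \|\nabla f(y)\|_*\, v_f(y,x_*) = 0$, and since $\omega(0) = 0$ the inequality (\ref{eq_lemma}) holds. So henceforth I assume $\tau := v_f(y,x_*) > 0$, and in particular $\nabla f(y) \ne 0$.

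The main step is the construction of the witness $z$. Since $E$ is finite-dimensional, the maximum in $\|\nabla f(y)\|_* = \max\{\langle \nabla f(y), x\rangle : \|x\| \le 1\}$ is attained, and because $\nabla f(y) \ne 0$ it is attained at some $g$ with $\|g\| = 1$ and $\langle \nabla f(y), g\rangle = \|\nabla f(y)\|_*$. Put $z := x_* + \tau g$; then $\|z - x_*\| = \tau$, so $z$ is admissible in the maximum (\ref{eq13}) defining $\omega(\tau)$. Applying the subgradient inequality for $f$ at the point $y$ to $w = z$ yields
$$f(z) \ge f(y) + \langle \nabla f(y), z - y\rangle = f(y) + \langle \nabla f(y), x_* - y\rangle + \tau\langle \nabla f(y), g\rangle .$$
By the definition (\ref{eq:vfDef}) of $v_f$ one has $\langle \nabla f(y), x_* - y\rangle = -\|\nabla f(y)\|_*\, v_f(y,x_*) = -\tau\|\nabla f(y)\|_*$, while $\tau\langle \nabla f(y), g\rangle = \tau\|\nabla f(y)\|_*$; the two terms cancel, so $f(z) \ge f(y)$. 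Hence $\omega(v_f(y,x_*)) = \omega(\tau) \ge f(z) - f(x_*) \ge f(y) - f(x_*)$, which is (\ref{eq_lemma}).

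I expect the only non-routine ingredient to be spotting this witness $z$, namely the ``steepest-growth'' direction of $\nabla f(y)$ traversed for exactly the distance $v_f(y,x_*)$, chosen so that the linear underestimate of $f$ based at $y$ recovers the value $f(y)$. The one point demanding care is the membership $z = x_* + \tau g \in X$, needed for $z$ to be eligible in the maximum over $x \in X$ in (\ref{eq13}): it is automatic when $X = E$ (the setting of the sources cited for this lemma), and otherwise holds whenever $X$ reaches distance at least $v_f(y,x_*)$ from $x_*$ in the direction $g$; in any case one may harmlessly take the maximum in (\ref{eq13}) over all of $E$, since $\omega$ is subsequently used only through its monotonicity.
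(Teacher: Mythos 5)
The paper itself gives no proof of this lemma --- it is stated as a known auxiliary assertion with a pointer to Nesterov's work --- and your argument is exactly the standard one from those sources: take the maximizer $g$ of $\langle\nabla f(y),\cdot\rangle$ over the unit ball, set $z=x_*+\tau g$ with $\tau=v_f(y,x_*)$, and observe that the subgradient inequality at $y$ gives $f(z)\ge f(y)$ because the two inner-product terms cancel. The computation is correct, the degenerate case $\tau=0$ is handled properly, and the one genuine subtlety --- that $z$ must lie in $X$ for the maximum in \eqref{eq13} to see it, which fails for general convex $X$ unless one enlarges the domain of the max (or assumes $f$ is defined and convex on all of $E$) --- is exactly the caveat you flag; so nothing is missing relative to what the cited references establish.
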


Now we can show, how using the previous assertion and Theorem \ref{th1}, one can estimate the rate of convergence of the Algorithm \ref{alg1} if the objective function $f$ is differentiable and its gradient satisfies the Lipschitz condition:
\begin{equation}\label{eqlipgrad}
||\nabla f(x)-\nabla f(y)||_*\leq L||x-y|| \quad \forall x,y\in X.
\end{equation}

Using the next well-known fact
$$f(x)\leq f(x_*)+||\nabla f(x_*)||_*||x-x_*||+\frac{1}{2}L||x-x_*||^2,$$
we can get that
$$\min\limits_{k\in I}f(x^k)-f(x_*)\leq\min\limits_{k\in I} \left\{||\nabla f(x_*)||_*||x^k-x_*||+\frac{1}{2}L||x^k-x_*||^2\right\}.$$
So,
$$f(x)-f(x_*)\leq ||\nabla f(x_*)||_*\frac{\varepsilon}{M_g}+\frac{L\varepsilon^2}{2M_g}=\varepsilon_f+\frac{L\varepsilon^2}{2M_g},$$
where $$\varepsilon_f=||\nabla f(x_*)||_*\frac{\varepsilon}{M_g}.$$

That is why the following result holds.

\begin{corollary}\label{col3}
If $f$ is differentiable on $X$ and \eqref{eqlipgrad} holds. Then after $$N=\left\lceil\frac{2M_g^2\Theta_0^2}{\varepsilon^2}\right\rceil$$ steps of Algorithm \ref{alg1} working the next estimate can be fulfilled: $$\min\limits_{0\leq k\leq N}f(x^k)-f(x_*)\leq\varepsilon_f+\frac{L}{2}\frac{\varepsilon^2}{M_g^2},
$$ where $$\varepsilon_f=||\nabla f(x_*)||_*\frac{\varepsilon}{M_g}.$$
\end{corollary}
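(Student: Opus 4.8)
The plan is to chain Theorem~\ref{th1} with the $\omega$-estimate \eqref{eq_lemma} and the standard quadratic upper bound implied by \eqref{eqlipgrad}. First I would invoke Theorem~\ref{th1}: after $N=\left\lceil 2M_g^2\Theta_0^2/\varepsilon^2\right\rceil$ iterations of Algorithm~\ref{alg1} the set $I$ of productive steps is nonempty and there is an index $k_0\in I$ with $v_f(x^{k_0},x_*)<\varepsilon/M_g$. Applying \eqref{eq_lemma} with $y=x^{k_0}$ then gives
$$f(x^{k_0})-f(x_*)\leq\omega(v_f(x^{k_0},x_*)).$$

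Second, I would bound $\omega$ from above. The function $\tau\mapsto\omega(\tau)$ defined in \eqref{eq13} is nondecreasing, since enlarging $\tau$ only enlarges the feasible set in the maximum; hence $\omega(v_f(x^{k_0},x_*))\leq\omega(\varepsilon/M_g)$. For every $x\in X$ with $\|x-x_*\|\leq\tau$, the Lipschitz-gradient inequality $f(x)\leq f(x_*)+\|\nabla f(x_*)\|_*\|x-x_*\|+\frac{1}{2}L\|x-x_*\|^2$ yields $f(x)-f(x_*)\leq\|\nabla f(x_*)\|_*\tau+\frac{1}{2}L\tau^2$; taking the supremum over such $x$ gives $\omega(\tau)\leq\|\nabla f(x_*)\|_*\tau+\frac{1}{2}L\tau^2$. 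Setting $\tau=\varepsilon/M_g$ produces
$$\omega(\varepsilon/M_g)\leq\|\nabla f(x_*)\|_*\frac{\varepsilon}{M_g}+\frac{L}{2}\frac{\varepsilon^2}{M_g^2}=\varepsilon_f+\frac{L}{2}\frac{\varepsilon^2}{M_g^2}.$$

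Finally, since $k_0\in I\subset\{0,1,\dots,N\}$ we have $\min\limits_{0\leq k\leq N}f(x^k)\leq f(x^{k_0})$, and combining these three inequalities gives exactly $\min\limits_{0\leq k\leq N}f(x^k)-f(x_*)\leq\varepsilon_f+\frac{L}{2}\frac{\varepsilon^2}{M_g^2}$, as claimed.

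The one point worth care is resisting the temptation to bound $\|x^{k_0}-x_*\|$ directly: the natural inequality $v_f(x^{k_0},x_*)\leq\|x^{k_0}-x_*\|$ points the wrong way, so the iterate distance is not controlled a priori. Routing the argument through $\omega$ is precisely what circumvents this --- $\omega$ encodes the worst-case objective gap over a ball of the relevant radius, and it is $\omega$, rather than the distance itself, that \eqref{eqlipgrad} bounds from above. Everything else is a routine substitution.
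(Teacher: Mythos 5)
Your proof is correct and follows essentially the same route the paper intends: Theorem~\ref{th1} gives a productive index with $v_f(x^{k_0},x_*)<\varepsilon/M_g$, Lemma~2 (the $\omega$-function) converts this into a bound on $f(x^{k_0})-f(x_*)$, and the quadratic upper bound from \eqref{eqlipgrad} bounds $\omega(\varepsilon/M_g)$. In fact your explicit use of the monotonicity of $\omega$ is the careful version of the step the paper's own display glosses over (it substitutes $\varepsilon/M_g$ for $\|x^k-x_*\|$ directly, which is exactly the unjustified shortcut you rightly flag), so no issues here.
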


We can apply our method to some class of problems with a special class of non-smooth objective functionals.

\begin{corollary}\label{col31}
Assume that $f(x) = \max\limits_{i = \overline{1, m}} f_i(x)$, where $f_i$ is differentiable at each $x \in X$ and
$$
||\nabla f_i(x)-\nabla f_i(y)||_*\leq L_i||x-y|| \quad \forall x,y\in X.
$$
Then after $$N=\left\lceil\frac{2M_g^2\Theta_0^2}{\varepsilon^2}\right\rceil$$ steps of Algorithm \ref{alg1} working the next estimate can be fulfilled: $$\min\limits_{0\leq k\leq N}f(x^k)-f(x_*)\leq\varepsilon_f+\frac{L}{2}\frac{\varepsilon^2}{M_g^2},
$$ where $$\varepsilon_f=||\nabla f(x_*)||_*\frac{\varepsilon}{M_g}, \quad L = \max\limits_{i = \overline{1, m}} L_i.$$
\end{corollary}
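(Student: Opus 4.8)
The plan is to reduce Corollary~\ref{col31} to Corollary~\ref{col3} by verifying that the max-type objective $f(x)=\max_{i=\overline{1,m}} f_i(x)$ satisfies all the hypotheses used in the proof of Corollary~\ref{col3}, even though $f$ itself need not be differentiable. The only place differentiability of $f$ entered was through the second-order upper bound $f(x)\le f(x_*)+\|\nabla f(x_*)\|_*\|x-x_*\|+\tfrac12 L\|x-x_*\|^2$ and the definition of the subgradient $\nabla f(x^k)$ used in the algorithm. So the key step is to establish an analogous inequality for $f$ with $L=\max_i L_i$.

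First I would fix an index $i_*$ attaining the maximum at the optimal point, i.e. $f(x_*)=f_{i_*}(x_*)$. Since each $f_i$ has an $L_i$-Lipschitz gradient, the standard quadratic upper bound gives, for every $x\in X$,
\begin{equation*}
f_{i_*}(x)\le f_{i_*}(x_*)+\langle\nabla f_{i_*}(x_*),x-x_*\rangle+\frac{L_{i_*}}{2}\|x-x_*\|^2\le f_{i_*}(x_*)+\|\nabla f_{i_*}(x_*)\|_*\|x-x_*\|+\frac{L}{2}\|x-x_*\|^2.
\end{equation*}
However, $f(x)=\max_i f_i(x)$ can exceed $f_{i_*}(x)$, so this alone is not enough; instead I would bound $f(x)-f(x_*)$ directly. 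For any $x$, let $j$ be an index with $f(x)=f_j(x)$. Then $f(x)-f(x_*)=f_j(x)-f(x_*)\le f_j(x)-f_j(x_*)$ since $f(x_*)\ge f_j(x_*)$; applying the quadratic bound to $f_j$ yields $f(x)-f(x_*)\le \|\nabla f_j(x_*)\|_*\|x-x_*\|+\frac{L}{2}\|x-x_*\|^2$. This is not yet of the desired form because $\|\nabla f_j(x_*)\|_*$ depends on $j$, not on a gradient of $f$ at $x_*$; here I would use that every $\nabla f_j(x_*)$ with $f_j(x_*)=f(x_*)$ is a subgradient of $f$ at $x_*$, and more carefully, note that in the regime where $\|x-x_*\|$ is small (which is exactly the regime guaranteed by Theorem~\ref{th1}, namely $\|x^k-x_*\|\le v_f(x^k,x_*)<\varepsilon/M_g$), only indices $j$ active at $x_*$ can be active at $x$, so $f_j(x_*)=f(x_*)$ and $\nabla f_j(x_*)\in\partial f(x_*)$; choosing the subgradient defining $v_f$ appropriately gives $\|\nabla f_j(x_*)\|_*\le\|\nabla f(x_*)\|_*$.

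With that quadratic upper bound in hand, the remainder is a verbatim repetition of the argument preceding Corollary~\ref{col3}: Theorem~\ref{th1} gives an index $k\in I$ with $v_f(x^k,x_*)<\varepsilon/M_g$, hence $\|x^k-x_*\|\le v_f(x^k,x_*)<\varepsilon/M_g$ by the definition \eqref{eq:vfDef} of $v_f$ together with $1$-strong convexity, and substituting into the quadratic bound gives
\begin{equation*}
\min_{0\le k\le N} f(x^k)-f(x_*)\le \|\nabla f(x_*)\|_*\frac{\varepsilon}{M_g}+\frac{L}{2}\frac{\varepsilon^2}{M_g^2}=\varepsilon_f+\frac{L}{2}\frac{\varepsilon^2}{M_g^2}.
\end{equation*}
The main obstacle I anticipate is precisely the bookkeeping around the subgradient at $x_*$: making rigorous that in the relevant neighborhood the active index $j$ at $x$ is active at $x_*$, so that $\|\nabla f_j(x_*)\|_*$ can be replaced by $\|\nabla f(x_*)\|_*$. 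If one does not want to invoke that local-activity argument, an alternative is simply to define $\varepsilon_f$ with $\max_i\|\nabla f_i(x_*)\|_*$ in place of $\|\nabla f(x_*)\|_*$, which sidesteps the issue at the cost of a slightly weaker constant; since the statement as written uses $\|\nabla f(x_*)\|_*$, I would go through the local-activity argument to get the sharper form.
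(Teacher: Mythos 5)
There is a genuine gap at the step where you pass from Theorem~\ref{th1} to a bound on $\|x^k-x_*\|$. You claim $\|x^k-x_*\|\le v_f(x^k,x_*)<\varepsilon/M_g$ ``by the definition \eqref{eq:vfDef} of $v_f$ together with $1$-strong convexity'', but the inequality between $v_f$ and the distance goes the other way: by the definition of the dual norm,
$$v_f(x^k,x_*)=\left\langle\frac{\nabla f(x^k)}{\|\nabla f(x^k)\|_*},\,x^k-x_*\right\rangle\le\|x^k-x_*\|,$$
so a small value of $v_f(x^k,x_*)$ gives no control on $\|x^k-x_*\|$ (and the $1$-strong convexity of the d.g.f.\ $d$ is irrelevant here --- $f$ is not assumed strongly convex in this section). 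Hence the ``verbatim repetition'' step, which substitutes $\|x^k-x_*\|<\varepsilon/M_g$ into the quadratic upper bound, does not go through. The missing ingredient is exactly the lemma with \eqref{eq13}--\eqref{eq_lemma}: setting $\omega(\tau)=\max\{f(x)-f(x_*):\|x-x_*\|\le\tau\}$, one has $f(x^k)-f(x_*)\le\omega(v_f(x^k,x_*))$, i.e.\ the value gap at $x^k$ is bounded by the worst value gap over a ball of radius $v_f(x^k,x_*)$ centered at $x_*$, even though $x^k$ itself may be far from $x_*$. One then applies the quadratic upper bound to the points of that small ball (not to $x^k$), obtaining $\omega(\tau)\le\|\nabla f(x_*)\|_*\,\tau+\frac{L}{2}\tau^2$ and the claimed estimate with $\tau<\varepsilon/M_g$.

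Your treatment of the max-structure itself is sound and is essentially what the paper intends: pick the active index $j$ at the evaluation point, use $f(x)-f(x_*)\le f_j(x)-f_j(x_*)$, and apply the $L_j$-quadratic bound with $L=\max_i L_i$. Your observation that $\|\nabla f_j(x_*)\|_*$ need not coincide with $\|\nabla f(x_*)\|_*$ is a fair criticism of the statement's notation; note, however, that the local-activity repair you sketch only forces $j$ to be active at $x_*$ when $\varepsilon/M_g$ is below a threshold determined by the gaps $f(x_*)-f_j(x_*)$ over inactive $j$, so for a fixed $\varepsilon$ the safe reading is your fallback constant $\max_i\|\nabla f_i(x_*)\|_*$ in place of $\|\nabla f(x_*)\|_*$. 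But this refinement is secondary; the reversed inequality above is the step that must be replaced by the $\omega$-lemma for the proof to be correct.
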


\begin{remark}
Generally, $||\nabla f(x_*)||_* \neq 0$, because we consider some class of constrained problems.
\end{remark}

\section{On the Technique of Restarts in the Considered Version of Mirror Descent for Strongly Convex Problems}

In this subsection, we consider problem
\begin{equation}\label{eq14}
f(x)\rightarrow\min,\;\;g(x)\leq 0,\;\;x\in X
\end{equation}
with assumption \eqref{eq1} and additional assumption of strong convexity of $f$ and $g$ with the same parameter $\mu$, i.e.,
$$
f(y) \geq f(x) + \langle \nabla f(x), y-x \rangle + \frac{\mu}{2} \| y-x \|^2, \quad x, y \in X
$$
and the same holds for $g$. We also slightly modify assumptions on prox-function $d(x)$. Namely, we assume that $0 = \arg \min_{x \in X} d(x)$ and that $d$ is bounded on the unit ball in the chosen norm $\|\cdot\|_E$, that is
\begin{equation}
d(x) \leq \frac{\Omega}{2} \quad \forall x\in X : \|x \| \leq 1,
\label{eq:dUpBound}
\end{equation}
where $\Omega$ is some known number. Finally, we assume that we are given a starting point $x_0 \in X$ and a number $R_0 >0$ such that $\| x_0 - x_* \|^2 \leq R_0^2$.

To construct a method for solving problem \eqref{eq14} under stated assumptions, we use the idea of restarting Algorithm \ref{alg1}. The idea of restarting a method for convex problems to obtain faster rate of convergence for strongly convex problems dates back to 1980's, see e.g. \cite{nemirovsky1983problem,nesterov1983method}. To show that restarting algorithm is also possible for problems with inequality constraints, we rely on the following Lemma (see, e.g. \cite{bayandina2018primal-dual}).
\begin{lemma}\label{lem3}
If $f$ and $g$ are $\mu$-strongly convex functionals with respect to the norm $\|\cdot\|$ on $X$, $x_{\ast} = arg\min\limits_{x \in X} f(x)$, $g(x)\leq 0$ ($\forall x \in X$) and $\varepsilon_{f}>0$ and $\varepsilon_{g}>0$:
\begin{equation}\label{eq15}
f(x)-f(x_{\ast})\leq \varepsilon_{f},\;\;g(x)\leq\varepsilon_{g}.
\end{equation}
Then
\begin{equation}\label{eq16}
\frac{\mu}{2}\|x-x_{\ast}\|^{2}\leq\max\{\varepsilon_{f},\varepsilon_{g}\}.
\end{equation}
\end{lemma}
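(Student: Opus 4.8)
The plan is to exploit the two hypotheses in \eqref{eq15} separately and then combine them using strong convexity around the minimizer $x_*$. First I would use the strong convexity of $f$: since $x_* = \arg\min_{x\in X} f(x)$, the subgradient optimality condition gives $\langle \nabla f(x_*), x - x_* \rangle \geq 0$ for all $x \in X$, and hence $\mu$-strong convexity yields
$$f(x) \geq f(x_*) + \frac{\mu}{2}\|x - x_*\|^2 \quad \forall x \in X.$$
Combined with $f(x) - f(x_*) \leq \varepsilon_f$ this already produces $\frac{\mu}{2}\|x - x_*\|^2 \leq \varepsilon_f$, which is stronger than \eqref{eq16} in the case $\varepsilon_f \geq \varepsilon_g$. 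So on its own the first hypothesis handles half the job.

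The second part of the argument must cover the case when $g(x) \leq \varepsilon_g$ but $\varepsilon_g$ could exceed $\varepsilon_f$; here one cannot directly compare $g(x)$ with $g(x_*)$ in a useful direction, so the idea is to pass through an auxiliary point. The natural candidate is a point on the segment between $x$ and $x_*$, or more cleanly, to argue by contradiction: suppose $\frac{\mu}{2}\|x - x_*\|^2 > \max\{\varepsilon_f, \varepsilon_g\}$. Then in particular $\frac{\mu}{2}\|x-x_*\|^2 > \varepsilon_f$, which contradicts the bound $\frac{\mu}{2}\|x-x_*\|^2 \leq \varepsilon_f$ we already derived from the first hypothesis — wait, that is too quick and would make the hypothesis on $g$ superfluous. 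The correct reading is that $x_*$ is the constrained minimizer, not the unconstrained one, so $x_* = \arg\min f$ over the feasible set $\{g \leq 0\}$, and the inequality $\langle \nabla f(x_*), x - x_*\rangle \geq 0$ need not hold for points $x$ with $g(x) > 0$. Thus for a point $x$ that is merely $\varepsilon_g$-feasible, one genuinely needs the $g$-information.

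Concretely, I would take the point $y = \alpha x + (1-\alpha) x_*$ for a suitable $\alpha \in [0,1)$ chosen so that $y$ is feasible, i.e. $g(y) \leq 0$: by convexity of $g$, $g(y) \leq \alpha g(x) + (1-\alpha)g(x_*) \leq \alpha \varepsilon_g$, which is $\leq 0$ only if $\alpha = 0$, so a plain convex combination does not suffice either. The cleaner route is the direct one used in \cite{bayandina2018primal-dual}: apply $\mu$-strong convexity of $f$ at $x_*$ with the subgradient inequality, but account for the constraint via the fact that, whenever $g(x) \le \varepsilon_g$, a scaled point $\tilde x$ with $g(\tilde x) \le 0$ lies within distance proportional to $\varepsilon_g / M_g$ of $x$; combining $f(\tilde x) \ge f(x_*) + \frac{\mu}{2}\|\tilde x - x_*\|^2$ with the near-feasibility and $f(x) - f(x_*) \le \varepsilon_f$ gives \eqref{eq16} after bounding cross terms. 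The main obstacle is precisely this handling of approximate feasibility: one must carefully choose the auxiliary feasible point and control the error it introduces so that the final bound involves only $\max\{\varepsilon_f,\varepsilon_g\}$ and not, say, $\varepsilon_g^2/\mu$ or a term with $M_g$. Once that technical lemma-within-the-proof is in place, the rest is a one-line combination of the two strong-convexity estimates, and I would present it as a short proof by contradiction to keep the bookkeeping minimal.
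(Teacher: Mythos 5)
You have correctly located the crux — $x_*$ is the minimizer of $f$ over the feasible set $\{x\in X:\,g(x)\le 0\}$, so the inequality $\langle\nabla f(x_*),x-x_*\rangle\ge 0$ is only available for feasible $x$, while the point $x$ in the lemma is merely $\varepsilon_g$-feasible — and your first case is fine. But the plan for the remaining case has a genuine gap, and the specific repair you sketch does not work. The assertion that whenever $g(x)\le\varepsilon_g$ there is a feasible point $\tilde x$ within distance of order $\varepsilon_g/M_g$ of $x$ is false in general: Lipschitz continuity of $g$ gives $g(y)\ge g(x)-M_g\|x-y\|$, i.e.\ a \emph{lower} bound on how far one must travel to reach $\{g\le 0\}$, not an upper bound (the feasible set may be arbitrarily far from $x$ in that direction). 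And, as you yourself note, even granting such a point, the error terms would bring $M_g$ or $\varepsilon_g^2$ into the final estimate. So the hard half of the lemma remains unproved. (For the record, the paper does not prove this lemma either — it cites \cite{bayandina2018primal-dual} — so the comparison below is with the standard argument from that reference.)

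The missing ingredient is the Lagrange/KKT condition at $x_*$: there is $\lambda\ge 0$ with $\lambda g(x_*)=0$ and $\langle\nabla f(x_*)+\lambda\nabla g(x_*),\,x-x_*\rangle\ge 0$ for all $x\in X$. Multiply the two strong-convexity inequalities $f(x)\ge f(x_*)+\langle\nabla f(x_*),x-x_*\rangle+\tfrac{\mu}{2}\|x-x_*\|^2$ and $g(x)\ge g(x_*)+\langle\nabla g(x_*),x-x_*\rangle+\tfrac{\mu}{2}\|x-x_*\|^2$ by $\tfrac{1}{1+\lambda}$ and $\tfrac{\lambda}{1+\lambda}$ and add; the gradient terms drop out by KKT, $\lambda g(x_*)=0$ removes $g(x_*)$, and
\begin{equation*}
\frac{\mu}{2}\|x-x_*\|^2\;\le\;\frac{1}{1+\lambda}\bigl(f(x)-f(x_*)\bigr)+\frac{\lambda}{1+\lambda}\,g(x)\;\le\;\frac{\varepsilon_f+\lambda\varepsilon_g}{1+\lambda}\;\le\;\max\{\varepsilon_f,\varepsilon_g\},
\end{equation*}
the last step because the middle expression is a convex combination. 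Your Case 1 is exactly $\lambda=0$. Note also that when you dismissed the convex-combination route you used only plain convexity of $g$; strong convexity gives the extra term $-\tfrac{\mu}{2}\alpha(1-\alpha)\|x-x_*\|^2$ in $g(\alpha x+(1-\alpha)x_*)$, but pursuing that line still loses a constant factor, so the multiplier argument above is the one to use.
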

In conditions of Corollary \ref{col31}, after Algorithm \ref{alg1} stops the inequalities will be true \eqref{eq15} for $$\displaystyle{\varepsilon_{f}=\frac{\varepsilon}{M_g}\|\nabla f(x_{\ast})\|_{\ast}+\frac{\varepsilon^{2}L}{2M_g^2}}$$
and $\varepsilon_{g}=\varepsilon$. Consider the function $\tau: \mathbb{R}^{+}\rightarrow\mathbb{R}^{+}$:
$$\tau(\delta)=\max\left\{\delta\|\nabla f(x_{\ast})\|_{\ast}+\frac{\delta^{2}L}{2}; \; \delta M_g\right\}.$$
It is clear that $\tau$ increases and therefore for each $\varepsilon>0$ there exists $$\varphi(\varepsilon)>0:\;\;\tau(\varphi(\varepsilon))=\varepsilon.$$
\begin{remark}
$\varphi(\varepsilon)$ depends on $\|\nabla f(x_{\ast})\|_{\ast}$ and Lipschitz constant $L$ for $\nabla f$. If $\|\nabla f(x_{\ast})\|_{\ast} < M_g$ then $\varphi(\varepsilon) = \varepsilon$ for small enough $\varepsilon$:
$$
\varepsilon < \frac{2(M_g - \|\nabla f(x_{\ast})\|_{\ast})}{L}.
$$
For another case ($\|\nabla f(x_{\ast})\|_{\ast} > M_g$) we have $\forall \varepsilon > 0$:
$$
\varphi(\varepsilon) = \frac{\sqrt{\|\nabla f(x_{\ast})\|_{\ast}^2 + 2\varepsilon L} - \|\nabla f(x_{\ast})\|_{\ast}}{L}.
$$
\end{remark}
Let us consider the following Algorithm \ref{Alg2} for the problem \eqref{eq14}.

\begin{algorithm}[h!]
\caption{Algorithm for the Strongly Convex Problem}
\label{Alg2}
\begin{algorithmic}[1]
  \REQUIRE accuracy $\varepsilon > 0$; strong convexity parameter $\mu$; $\Theta_0^2$ s.t. $d(x) \leq \Theta_0^2  \quad  \forall x\in X : \|x \| \leq 1$; starting point $x_0$ and number $R_0$ s.t. $\| x_0 - x_* \|^2 \leq R_0^2$.
  \STATE  Set $d_0(x) = d\left(\frac{x-x_0}{R_0}\right)$.
	\STATE Set $p=1$.
	\REPEAT
    \STATE Set $R_p^2 = R_0^2 \cdot 2^{-p}$.
		\STATE Set $\varepsilon_p = \frac{\mu R_p^2}{2}$.
		\STATE Set $x_p$ as the output of Algorithm \ref{alg1} with accuracy $\varepsilon_p$, prox-function $d_{p-1}(\cdot)$ and $\Theta_0^2$.
		\STATE $d_p(x) \gets d\left(\frac{x - x_p}{R_p}\right)$.
    \STATE Set $p = p + 1$.
  \UNTIL{$p>\log_2 \frac{\mu R_0^2}{2\varepsilon}$.}
  \ENSURE $x_p$.
\end{algorithmic}
\end{algorithm}

The following theorem is fulfilled.
\begin{theorem}\label{th2}
Let $f$ and $g$ satisfy Corollary \ref{col31}. If $f, g$ are $\mu$-strongly convex functionals on $X\subset\mathbb{R}^{n}$ and $d(x)\leq \theta^{2}_{0}\;\;\forall\,x\in X,\;\;\|x\|\leq1$. Let the starting point $x_{0}\in X$ and the number $R_{0}>0$ be given and $\|x_{0}-x_{\ast}\|^{2}\leq R^{2}_{0}$. Then for $\displaystyle{\widehat{p}=\left\lceil\log_{2}\frac{\mu R_{0}^{2}}{2\varepsilon}\right\rceil}\;\;\;x_{\widehat{p}}$ is the $\varepsilon$-solution of Problem \eqref{eq2} -- \eqref{problem_statement_g}, where
$$\|x_{\widehat{p}}-x_{\ast}\|^{2}\leq\frac{2\varepsilon}{\mu}.$$
At the same time, the total number of iterations of Algorithm \ref{alg1} does not exceed
$$\widehat{p}+\sum_{p=1}^{\widehat{p}}\frac{2\theta^{2}_{0}Mg^{2}}{\varphi^{2}(\varepsilon_{p})},\;\;\text{where}\;\;\varepsilon_{p}=\frac{\mu R^{2}_{0}}{2^{p+1}}.$$
\end{theorem}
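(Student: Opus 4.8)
The plan is to establish, by induction on the restart index $p$, that the outer iterate produced at stage $p$ satisfies $\|x_p - x_*\|^2 \le R_p^2 = R_0^2 2^{-p}$; both assertions of the theorem then follow. The base case $p = 0$ is precisely the standing hypothesis $\|x_0 - x_*\|^2 \le R_0^2$.

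For the inductive step, assume $\|x_{p-1} - x_*\|^2 \le R_{p-1}^2$ and consider stage $p$, where Algorithm \ref{alg1} is run with prox-function $d_{p-1}(x) = d\left(\frac{x - x_{p-1}}{R_{p-1}}\right)$. First I would verify that $d_{p-1}$ is an admissible distance generating function: it is continuously differentiable with $\arg\min_{x \in X} d_{p-1}(x) = x_{p-1}$, and --- after the accompanying dilation of the working norm by $R_{p-1}$, under which $1$-strong convexity is preserved --- the inductive hypothesis $\left\|\frac{x_* - x_{p-1}}{R_{p-1}}\right\| \le 1$ together with \eqref{eq:dUpBound} gives $d_{p-1}(x_*) \le \theta_0^2$, so $\theta_0^2$ may legitimately play the role of $\Theta_0^2$ at this stage. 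Next, the internal accuracy parameter $\varepsilon'$ handed to Algorithm \ref{alg1} is calibrated through the increasing function $\varphi$ (concretely, $\varepsilon'$ of order $\varphi(\varepsilon_p)$) so that the pair $(\varepsilon_f,\varepsilon_g)$ furnished by Corollary \ref{col31}, namely $\varepsilon_f = \frac{\varepsilon'}{M_g}\|\nabla f(x_*)\|_* + \frac{L(\varepsilon')^2}{2M_g^2}$ and $\varepsilon_g = \varepsilon'$ (equivalently $\max\{\varepsilon_f,\varepsilon_g\} = \tau\left(\frac{\varepsilon'}{M_g}\right)$), satisfies $\max\{\varepsilon_f,\varepsilon_g\} \le \varepsilon_p$. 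Then \eqref{eq15} holds at $x_p$, and Lemma \ref{lem3} --- applicable since $f$ and $g$ are $\mu$-strongly convex --- yields $\frac{\mu}{2}\|x_p - x_*\|^2 \le \max\{\varepsilon_f,\varepsilon_g\} \le \varepsilon_p = \frac{\mu R_p^2}{2}$, hence $\|x_p - x_*\|^2 \le R_p^2$. This closes the induction; note that Theorem \ref{th1} also guarantees $|I| \ne 0$ at each stage, so the output $\bar x$ of every inner run is well defined.

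It remains to read off the two conclusions. Set $\widehat p = \left\lceil \log_2 \frac{\mu R_0^2}{2\varepsilon}\right\rceil$. By the claim, $\|x_{\widehat p} - x_*\|^2 \le R_{\widehat p}^2 = R_0^2 2^{-\widehat p} \le \frac{2\varepsilon}{\mu}$, which is the bound asserted in the theorem; combining it with the $\mu$-strong convexity of $f$ and with $g(x_{\widehat p}) \le \varepsilon_{\widehat p}$ shows in addition that the residuals in objective and constraint are of order $\varepsilon$, so $x_{\widehat p}$ is an $\varepsilon$-solution of \eqref{eq2}--\eqref{problem_statement_g} in the required sense. For the total work, Theorem \ref{th1} (with $\Theta_0^2 = \theta_0^2$) bounds the number of inner iterations at stage $p$ by $\left\lceil \frac{2 M_g^2 \theta_0^2}{\varphi^2(\varepsilon_p)}\right\rceil$; summing over $p = 1,\dots,\widehat p$ and using $\lceil a\rceil \le a+1$ produces the stated bound $\widehat p + \sum_{p=1}^{\widehat p}\frac{2\theta_0^2 M_g^2}{\varphi^2(\varepsilon_p)}$, while $\varepsilon_p = \frac{\mu R_p^2}{2} = \frac{\mu R_0^2}{2^{p+1}}$ by construction.

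The main obstacle is the inductive step, and within it the bookkeeping for the rescaled prox-function $d_{p-1}$: one must check that, under the change of variables $x \mapsto (x - x_{p-1})/R_{p-1}$, the strong-convexity modulus of the d.g.f., the constant $M_g$, the quantity $\|\nabla f(x_*)\|_*$ and the bound $\theta_0^2$ transform consistently so that Theorem \ref{th1}, Corollary \ref{col31} and Lemma \ref{lem3} apply verbatim at every stage, and that $\varepsilon'$ is picked through $\varphi$ exactly so the inner output simultaneously controls $f(x_p) - f(x_*)$ and $g(x_p)$ at level $\varepsilon_p$ --- this calibration is what lets the halved-radius recursion $R_p^2 = R_0^2 2^{-p}$ propagate and, ultimately, what turns the $O(1/\varepsilon^2)$ complexity of Algorithm \ref{alg1} into the logarithmic-fold-restart bound above.
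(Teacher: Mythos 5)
Your proof is correct and follows essentially the same route as the paper: induction on the restart index to propagate $\|x_p-x_*\|^2\le R_p^2$ via Lemma \ref{lem3}, with the inner accuracy calibrated through $\varphi$ and the iteration count obtained by summing the per-stage bounds from Theorem \ref{th1}. If anything, you spell out the rescaling of the prox-function and the $\varphi$-calibration more explicitly than the paper does, but the underlying argument is identical.
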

\begin{proof}
The function $d_{p}(x)$ ($p=0,1,2,\ldots$) is $1$-strongly convex with respect to the norm $\displaystyle\frac{\|\cdot\|}{R_{p}}$. By the method of mathematical induction, we show that $$\|x_{\widehat{p}}-x_{\ast}\|^{2}\leq R_{p}\;\;\forall p\geq0.$$
For $p=0$ this assertion is obvious by virtue of the choice of $x_{0}$ and $R_{0}$. Suppose that for some $p$: $\|x_{p}-x_{\ast}\|^{2}\leq R_{p}^{2}$. Let us prove that $\|x_{p+1}-x_{\ast}\|^{2}\leq R_{p+1}^{2}$. We have $d_{p}(x_{\ast})\leq\theta^{2}_{0}$ and on $(p+1)$-th restart after no more than $$\displaystyle\left\lceil\frac{2\theta^{2}_{0}M_g^{2}}{\varphi^{2}(\varepsilon_{p+1})}\right\rceil$$ iterations of Algorithm \ref{alg1} the following inequalities are true:
$$f(x_{p+1})-f(x_{\ast})\leq\varepsilon_{p+1},\;\;\;g(x_{p+1})\leq\varepsilon_{p+1}\;\text{ for }\;\varepsilon_{p+1}=\frac{\mu R^{2}_{p+1}}{2}.$$
Then, according to Lemma \ref{lem3}
$$\|x_{p+1}-x_{\ast}\|^{2}\leq\frac{2\varepsilon_{p+1}}{\mu}=R^{2}_{p+1}.$$
So, for all $p\geq0$
$$\|x_{p}-x_{\ast}\|^{2}\leq R^{2}_{p}=\frac{R^{2}_{0}}{2^{p}},\;\;\;f(x_{p})-f(x_{\ast})\leq\frac{\mu R^{2}_{0}}{2}2^{-p},\;\;\;g(x_{p})\leq\frac{\mu R^{2}_{0}}{2}2^{-p}.$$

For $\displaystyle{p=\widehat{p}=\left\lceil\log_{2}\frac{\mu R_{0}^{2}}{2\varepsilon}\right\rceil}$ the following relation is true
$$\|x_{p}-x_{\ast}\|^{2}\leq R^{2}_{p}=R^{2}_{0}\cdot2^{-p}\leq\frac{2\varepsilon}{\mu}.$$

It remains only to note that the number of iterations of the work of Algorithm \ref{alg1} is no more than
$$\sum_{p=1}^{\widehat{p}}\left\lceil\frac{2\theta^{2}_{0}M_g^{2}}{\varphi^{2}(\varepsilon_{p+1})}\right\rceil\leq\widehat{p}+\sum_{p=1}^{\widehat{p}}\frac{2\theta^{2}_{0}M_g^{2}}{\varphi^{2}(\varepsilon_{p+1})}.$$
\end{proof}

\subsubsection*{Acknowledgments.} The authors are very grateful to Yurii Nesterov, Alexander Gasnikov and Pavel Dvurechensky for fruitful discussions. The authors would like to thank the unknown reviewers for useful comments and suggestions.

The research by Fedor Stonyakin and Alexander Titov (Algorithm 3, Theorem 3, Corollary 1 and Corollary 2) presented was partially supported by Russian Foundation for Basic Research according to the research project 18-31-00219. The research by Fedor Stonyakin (Algorithm 2 and Theorem 2) presented was partially supported by the grant of the President of the Russian Federation for young candidates of sciences, project no. MK-176.2017.1.

\end{document}